\newtheorem{lm}{Lemma}
\newtheorem{theorem}{Theorem}
\theoremstyle{remark}
\newtheorem{remark}{Remark}
\title{On bifurcations of symmetric elliptic orbits}
\author{
	M.\,S.\,Gonchenko$^1$	\\
	{\small
		$^1$ Departament de Matem\`atiques i Inform\`atica, Universitat  de Barcelona, Spain}\\
		}
\date{\today}
\begin{document}

\maketitle

\begin{abstract}
We study bifurcations of symmetric elliptic fixed points in the case of \emph{p}:\emph{q} resonances  with odd $q\geq 3$. We consider the case where the initial area-preserving map $\bar z =\lambda z + Q(z,z^*)$ possesses the central symmetry, i.e. is invariant under the change $z\to -z$, $z^*\to  -z^*$. We construct normal forms for such maps in the case $\lambda = e^{i 2\pi \frac{p}{q}}$, where $p$ and $q$ are mutually prime integer numbers, $p\leq q$ and $q$ is odd, and study local bifurcations of the fixed point $z=0$ in various settings. We prove the appearance of garlands consisting of four $q$-periodic orbits, two orbits are elliptic and two orbits are saddle, and describe the corresponding bifurcation diagrams for one- and two-parameter families. We also consider the case where the initial map is reversible and find conditions when non-symmetric periodic orbits of the garlands are non-conservative (compose symmetric pairs of stable and unstable orbits as well as area-contracting and area-expanding saddles).
\end{abstract}

\section{Introduction}

In the present paper we study local bifurcations of area-preserving maps which can be written in the complex coordinates $z=x+iy$ and $z^*=x-iy$  ($x,y \in \mathbb{R}$) as
\begin{equation}
\bar z = f(z, z^*) = \lambda \left(z + \sum\limits_{m+k\geq 2,\; m,k\geq 0} A_{m,k} z^m (z^*)^k\right),
\label{eq:2map}
\end{equation}
where eigenvalue $\lambda$ and coefficients $A_{m,k}$ are complex. Our main assumption are as follows
\begin{itemize}
\item[($A_1$)]
map (\ref{eq:2map}) is invariant under the change  $z\to -z$ (automatically $z^*\to -z^*$), i.e. $f(z,z^*)=-f(-z, -z^*)$. We say that the map possesses the so-called \emph{central symmetry}. This means that in the real plane $(x,y) \in \mathbb{R}^2$, the phase portraits are symmetric with respect to the change $x\to -x, y\to -y$;
\item[($A_2$)]
the eigenvalue takes the form $\displaystyle \lambda = e^{ i 2\pi \frac{p}{q}}$, where $p$ and $q$ are mutually prime natural numbers which satisfy $p<q$;
\item[($A_3$)] the denominator $q$ is an odd natural number such that $q\geq 3$.
\end{itemize}

We note that assumptions $(A_1)-(A_3)$ allow us to significantly simplify the normal form of map~(\ref{eq:2map}). Namely, we show that when $q\geq 3 $ is odd, the map can be brought by polynomial changes of coordinates to the form (see Section~\ref{sec:NFRes})
\begin{equation}
\displaystyle \bar z = e^{2\pi\frac{p}{q}i} \left( z + i\Omega(|z|^2)z + A_{0,2q-1}\cdot (z^*)^{2q-1}\right) + O\left(|z|^{2q+1}\right),
\label{eq:2mapnf0}
\end{equation}
where $\Omega(|z|^2) = g_1 |z|^2(1 + O(|z|^2)$ is a real-valued function. We assume also that
\begin{itemize}
\item[($A_4$)] the coefficients of the normal form~\eqref{eq:2mapnf0} satisfy $g_1 \neq 0 , \;\;  A_{0,2q-1} \neq 0.$
\end{itemize}

Under assumptions $(A_1)-(A_4)$, the fixed point $z=0$ of map~\eqref{eq:2map} is elliptic, and we call this point \emph{ central symmetric elliptic point of odd order}. 
In the present paper we study bifurcations of this fixed point in various settings. 

For two-dimensional symplectic maps, the basis of phenomenon of the $p$:$q$ resonance consists in a local bifurcation of a fixed (periodic) point with eigenvalues
$\lambda_{1,2} = e^{\pm i\;2\pi \frac{p}{q}}$, where $p$ and $q$ are mutually prime natural numbers and $p<q$. Among them, the so-called strong resonances, i.e. the bifurcation phenomena connected with the existence of periodic points with eigenvalues  $e^{\pm i\;2\pi/q}$ with $q=1,2,3,4$ (respectively, the 1:1, 1:2, 1:3 and 1:4 resonances), play a special role in the dynamics of area-preserving maps \cite{Arn73, AKN06} and demonstrate very specific bifurcations~\cite{Bir87, MS94, DM00, Gon05, GGO17,GGOV18}. On the other hand, the non-degenerate resonances with $q\geq 5$ are all similar in the sense that in a general one-parameter family of area-preserving maps, bifurcations of $p$:$q$ resonant fixed points lead to the appearance of a garland (stability islands) consisting of two $q$-periodic orbits of saddle and elliptic type, see Fig.~\ref{bif1p2q}.
In this case, one considers a real parameter $\mu$ which varies the angle of eigenvalues of fixed point such that the perturbed
eigenvalues are $\lambda_{1,2}(\mu) = e^{\pm i\;2\pi (\frac{p}{q}+\mu)}$.  Then, as well-known \cite{Arn73,Tak74,LQ94}, the study of local bifurcations of the initial map can be reduced to the study of bifurcations of equilibria in the corresponding flow of the form
\begin{equation}\label{eq40}
\dot z =
i\mu z + i \Phi(|z|^2)z + \delta (z^*)^{q-1} + O\left(|z|^{q+1}\right),
\end{equation}
where $\Phi(|z|^2)$ is a real-valued function such that $\Phi(0)=0$. The flow~\eqref{eq40} is called the \emph{standard flow normal form} in the case of $p$:$q$ resonance with $q\geq 3$. When the genericity conditions  $\Phi^\prime(0) = l_1 \neq 0$ and $\delta\neq 0$ are fulfilled in the case of resonance with $q\geq 5$, there emerges a conservative garland consisting of $q$ saddles and $q$ centers, as shown in Fig.~\ref{bif1p2q}. Accordingly, for the map, this means the appearance of a garland which consists of $q$-periodic  saddle and elliptic orbits. In the continuous case, we denote this kind of garland as $G_{q,q}$ where the subscript indexes stand for the numbers of the corresponding saddles and centers. In the discrete setting, the corresponding garland is $G^{1,1}_{q,q}$, where the superscript indexes indicate the numbers of the saddle and elliptic orbits that compose the garland and the subscript indexes stand for the periods of these orbits.

Note that the case with $q=3$ is completely different. It corresponds to the strong resonance of order 3.  Main bifurcations of such resonance in the conservative setting was first studied by Arnold~\cite{Arn73}, see also~\cite{AKN06, SV09, BHJVW03,GKSS22}. When $\delta\neq 0$, the zero equilibrium at $\mu =0$ is a saddle point with 6 separatrices that at varying $\mu$ becomes a center surrounded by 3 saddles, see Fig.~\ref{bif1p2q}, bottom plots, where reconstructions of phase portraits are shown when passing through nondegenerate 1:3 resonance 

\begin{figure}[t]
	\centerline{\includegraphics[width=10cm]{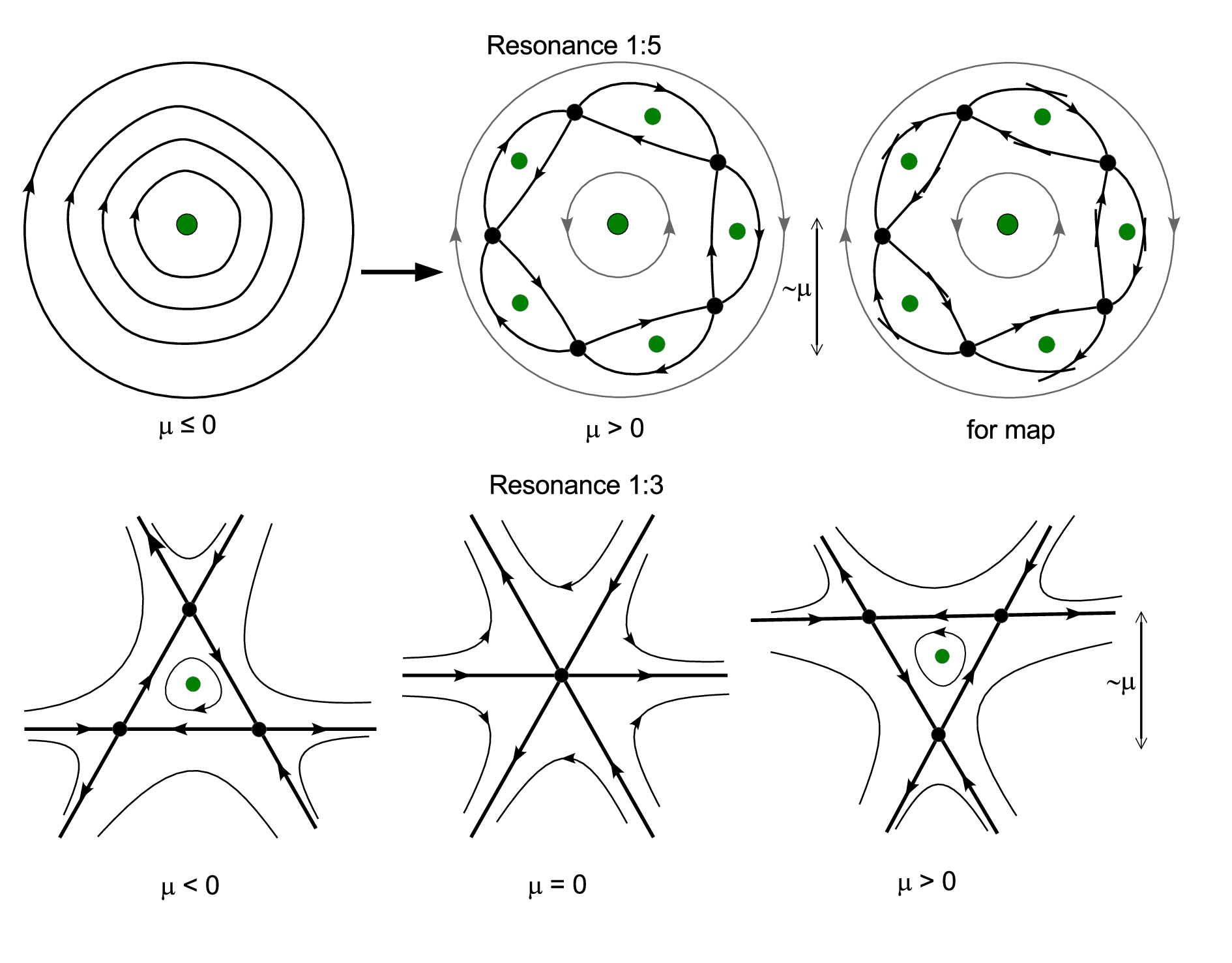}}
	\caption{{\footnotesize {A generic conservative bifurcation of the $p$:$q$ resonance with $q\geq 3$. As an example, we represent the cases of 1:5 resonance, top plots, and 1:3 resonance, bottom plots. The plots are related to flow~(\ref{eq40}), except for the top right plot where we illustrate the results for the initial map. 
	For the flow, the 1:5 resonance is as follows:
	for $l_1\mu>0$ only the equilibrium $z=0$ exists (top left plot); for $l_1\mu <0$ a garland $G_{q,q}$ around $z=0$ appears containing $2q$ equilibria: $q$ saddles and $q$ centers (top center plot); for a map $G_{q,q}$ corresponds to a garland $G_{q,q}^{1,1}$ which contains two saddle and elliptic $q$-periodic orbits (note that unlike the flow case the invariant manifolds of saddles split in general).
	A nondegenerate 1:3 resonance has the following structure: for $\mu=0$ point $z=0$ is a saddle with 6 separatrices and for $\mu\neq 0$ it becomes a center surrounded by 3 saddle equilibria connected by a heteroclinic contour of different orientation for $\mu<0$ and $\mu>0$.}}}
\label{bif1p2q}
\end{figure}

It is also worth mentioning that for even $q$ the truncated normal form of map~(\ref{eq40}), i.e. without the $O$-terms, is automatically centrally symmetric and, therefore, the assumption ($A_1$) on the central symmetry for map~(\ref{eq:2map}) is not necessary.
However, when $q$ is odd, this assumption implies that $\delta =0$ which means that the centrally symmetric $p$:$q$ resonance for odd $q$ is degenerate.

In~\cite{GLRT14,GT17}, bifurcations of degenerate $p$:$q$ resonances (with $\delta=0$) were studied in the reversible case, i.e. when the map $f$ and the inverse map $f^{-1}$ are conjugate by means of an involution $h$, i.e. $f=h\circ f^{-1} \circ h$ and $h^2=Id$,  and  perturbations preserve the reversibility but destroy the conservativity. Note that the reversibility implies symmetry of orbits. We call an orbit \emph{reversibly symmetric} if it intersects the set of fixed points of the involution $Fix(h)=\{x\in \mathbb{R}^2: h(x)=x\}$ or the set $Fix(h\circ f)$. A reversibly symmetric orbit is saddle if the eigenvalues are $0<\lambda<1$ and $\lambda^{-1}$ and is elliptic if the eigenvalues are $\lambda_{1,2}=e^{\pm i \phi}, \phi\neq 0, \pi$.  Reversibly non-symmetric orbits can be of any type and they appear in pairs: for a reversible non-symmetric orbit $P_1$ with eigenvalues $\lambda_1$ and $\lambda_2$ there exists a symmetric orbit $P_2$ (i.e. $h(P_1)=P_2$ and $h(P_2)=P_1$) with eigenvalues $\lambda_1^{-1}$ and $\lambda_2^{-1}$. We say that they compose \emph{reversibly symmetric pairs}. Usually in reversible systems non-symmetric orbits emerge via symmetry-breaking bifurcations.

In the reversible non-conservative case, the flow normal form for the $p$:$q$ resonance can be presented as follows~\cite{GLRT14,GT17}
\begin{equation}\label{eq40a}
\dot z =
i\mu_1 z + i \Phi(|z|^2)z + i\mu_2 (z^*)^{q-1} + iA z^{q+1} + iB z (z^*)^q + O\left(|z|^{q+2}\right),
\end{equation}
where $\mu_1$ and $\mu_2$ are real parameters, $A$ and $B$ are real coefficients, 
\begin{equation}\label{eq:Phi} 
\Phi(z^2) = l_1 |z|^2+ l_2 |z|^4 + \ldots + l_n |z|^{2n} + \ldots 
\end{equation}
is a real-valued function with $l_1\neq 0$. In this case, flow~(\ref{eq40a}) is reversible, i.e. invariant under time reversal and the involution $z\to z^*$,
but not conservative if $B\neq A(q+1)$.
In \cite{GLRT14,GT17} it was proven that, depending on a relation between $A$ and $B$, 
bifurcations in~(\ref{eq40a}) lead to the appearance of a garland which contains either (i) $2q$ reversibly symmetric conservative saddle equilibria, $q$ asymptotically stable (sinks) and $q$ completely unstable (sources) equilibria or (ii) $2q$ reversibly symmetric conservative centers, $q$ saddles with negative divergence 
and  $q$ saddles with positive divergence.  We denote this non-conservative garland as $G'_{2q,q,q}$ with the indexes standing for the numbers of reversibly symmetric saddles/centers and nonconservative  equilibria. 
Accordingly, in the discrete case, this means the emergence of an equivalent garland $G'_{2q,q,q}$ with four $q$-periodic orbits, the two orbits are reversibly symmetric and conservative saddle or elliptic orbits, and the other two orbits are non-conservative and compose reversibly symmetric pairs of stable and unstable orbits or area-contracting (with the Jacobian $J< 1$) and area-expanding (with the Jacobian $J>1$) saddles.   
This result can be viewed as one of principally important results in the theory of mixed dynamics \cite{G16,GT17,GGK20,G21}.

Recall that the mixed dynamics is a new third form of dynamical chaos additional to the two well-known forms, conservative and dissipative chaos.
The fundamental property of mixed dynamics is that attractors and repellers have a non-empty intersection. Moreover, they intersect along closed invariant sets, the so-called \emph{reversible cores}, that attract nothing and repel nothing. In this case, any orbit near the core never leaves its neighbourhood: it only tends to the nearest attractor
as $t\to +\infty$ or to the nearest repeller as $t\to -\infty$. It was established also in~\cite{GLRT14,GT17} that, for generic two-dimensional reversible diffeomorphisms, any reversibly symmetric elliptic periodic orbit is a reversible core and the limit of periodic stable and unstable orbits. Note that one can observe mixed dynamics in applications~\cite{PT, Kaz13, GGKT17, Kuz17, EN19, GGK20, AS20}. The existence of Newhouse domains with mixed dynamics was also proved in~\cite{GTS97,LS04,DGGLS13,Tur15,DGGL18,GGS20}.  

The paper is organized as follows. In Section~\ref{sec:mainresults}, we expose and announce our main results.  In Section~\ref{sec:NFRes}, we derive the $2q$-order local normal forms and the corresponding flow normal forms near the $p$:$q$ resonant elliptic fixed point, when $q$ is odd. The rest of the paper is dedicated to the study of bifurcations in the flow normal forms in different settings. Namely, in Section~\ref{sec:SymmCase}, we consider the centrally symmetric case and establish the emergence of a symmetric garland. In Section~\ref{sec:pfork}, we study conservative symmetry-breaking bifurcations and prove the appearance of a conservative garland with non-symmetric conservative orbits. Finally, in Section~\ref{sec:NonCons}, we discuss reversible symmetry-breaking bifurcations which could lead to the birth of a non-conservative garland that contains non-symmetric orbits: either stable and unstable orbits or non-conservative saddles.

\section{Main results}\label{sec:mainresults}

In the present paper we study bifurcations of degenerate $p$:$q$ resonant elliptic fixed point with odd $q\geq 3$, but in a setting different from~\cite{GLRT14,GT17}.
We deal with a two-dimensional area-preserving map of the complex form~\eqref{eq:2map} under assumptions $(A_1) - (A_4)$ and study bifurcations of the fixed point under various types of perturbations which (a) preserve the central symmetry, (b) destroy the central symmetry but preserve the reversibility and conservativity, and (c) destroy the symmetry and conservativity and keep the reversibility. In all the cases, we construct the bifurcation diagrams in the corresponding one- and two-parameter families and pay attention to the appearance of garlands

We study bifurcations in one-parameter families of reversible area-preserving  maps  $\bar z = f(z,z^*,\mu)$ which preserve the symmetry $f(z,z^*,\mu) = -f(-z,-z^*,\mu)$. We show that in this case the corresponding flow normal form can be written as
\begin{equation}\label{eq401}
\dot z =
i\mu z + i \Phi(|z|^2)z + i \alpha (z^*)^{2q-1} + O\left(|z|^{2q+1}\right),
\end{equation}
where $\mu$ is a real parameter, $\alpha$ is a real coefficient and $\Phi$ is a real-valued function of the form~\eqref{eq:Phi}. 
Assumption ($A_4$) implies that 
\begin{equation}
l_1\neq 0, \;\;\; \alpha\neq 0.
\label{eq:nondeg_l1delta}
\end{equation}
We prove the following result.

\begin{figure}[t]
	\centerline{\includegraphics[width=8cm]{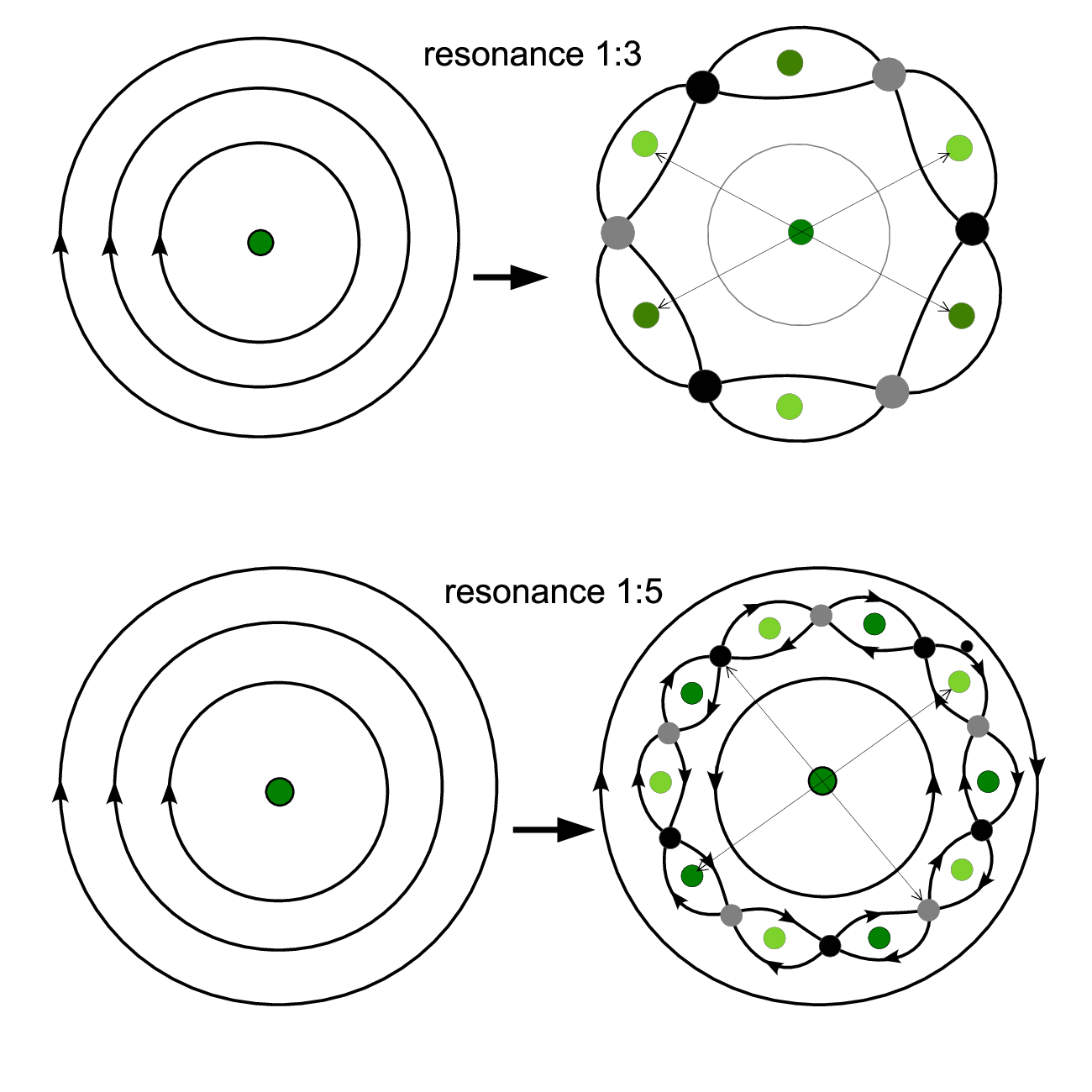}}
	\caption{{\footnotesize {Conservative bifurcations of the centrally symmetric 1:3 and 1:5 resonances. In both cases, before the bifurcation, there is only the elliptic resonant fixed point (with multipliers $e^{\pm i 2\pi/3}$ and $e^{\pm i 2\pi/5}$, respectively). After the bifurcation, there appear garlands $G_{6,6}$ and $G_{10,10}$, respectively. For the map, these garlands correspond to $G_{3,3}^{2,2}$ and $G_{5,5}^{2,2}$ which contain two saddle and two elliptic orbits of period $3$ and $5$, respectively.}
			{One (black) saddle  orbit  is centrally symmetric to the other (grey) saddle as well as the elliptic (dark green and light green) orbits compose a pair of mutually centrally symmetric orbits. The resulting garland is centrally symmetric.}
			{One can see that centrally symmetric bifurcations in the cases of the 1:3 resonance and $p$:$q$ resonances with odd $q\geq 5$ occur in the same way, compare with Fig.~\ref{bif1p2q}.}
	}}
	\label{bifpdf1p4q}
\end{figure}

\begin{theorem}
Let $U$ be a sufficiently small fixed neighbourhood of $z=0$. Then,
for $\mu$ sufficiently small,   flow~(\ref{eq401}), with function $\Phi$ defined in~\eqref{eq:Phi} and odd $q\geq 3$, has the following dynamics in $U$:
\begin{itemize}
\item if $\mu l_1 > 0$, there is only one center equilibrium; \item if at $\mu l_1 <0 $, there appears a garland $G_{2q, 2q}$ which consists of $4q$ equilibria, alternating $2q$ saddles and $2q$ centers, which lie nearly the circle $|z| = - \mu/l_1$ and are equidistant from each other at the angle $\pi/2q$.
\end{itemize}
\label{th:th1}
\end{theorem}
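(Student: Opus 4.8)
The plan is to analyze the truncated flow normal form and then argue that higher-order terms do not destroy the structure. First I would pass to polar-like coordinates adapted to the resonance. Writing $z = \rho e^{i\varphi}$, the flow~\eqref{eq401} (truncated, dropping the $O(|z|^{2q+1})$ tail) becomes, after separating real and imaginary parts, a system of the form $\dot\rho = \alpha\,\rho^{2q-1}\sin(2q\varphi)$ and $\dot\varphi = \mu + \Phi(\rho^2) + \alpha\,\rho^{2q-2}\cos(2q\varphi)$, using that $(z^*)^{2q-1} = \rho^{2q-1}e^{-i(2q-1)\varphi}$ so that $i\alpha(z^*)^{2q-1}/z = i\alpha\rho^{2q-2}e^{-i2q\varphi}$. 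The equilibria of this truncated system off the origin require $\sin(2q\varphi) = 0$, i.e. $\varphi = k\pi/(2q)$ for $k = 0,1,\dots,4q-1$, giving the $4q$ angular positions equidistant at angle $\pi/(2q)$; and then $\mu + \Phi(\rho^2) \pm \alpha\rho^{2q-2} = 0$. Since $\Phi(\rho^2) = l_1\rho^2 + O(\rho^4)$ and $\alpha\rho^{2q-2}$ is of higher order for $q\geq 2$, the dominant balance is $\mu + l_1\rho^2 = 0$, which has a (unique, for small $\mu$) positive solution $\rho^2 \approx -\mu/l_1$ precisely when $\mu l_1 < 0$, while for $\mu l_1 > 0$ there is no such $\rho$ and the only equilibrium is the origin (which is a center since $l_1\neq 0$ makes it a nondegenerate focus-type equilibrium of the conservative truncation — i.e. surrounded by closed orbits). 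This gives the dichotomy and the location $|z| = \sqrt{-\mu/l_1}$ up to higher-order corrections.

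Next I would determine the type of each of the $4q$ equilibria. The truncated flow is Hamiltonian (it is a conservative perturbation of a rotation; equivalently $\dot z = i\,\partial_{z^*} H$ for an appropriate real $H(|z|^2) + $ resonant term), so each nondegenerate equilibrium is either a center or a saddle according to the sign of the Hessian determinant of $H$, equivalently the product of the two eigenvalues of the linearization. Linearizing the $(\rho,\varphi)$ system at an equilibrium $(\rho_*, k\pi/(2q))$: the Jacobian has the schematic form with $\partial_\rho\dot\rho = \alpha(2q-1)\rho_*^{2q-2}\sin(2q\varphi_*) = 0$, $\partial_\varphi\dot\rho = 2q\alpha\rho_*^{2q-1}\cos(2q\varphi_*) = \pm 2q\alpha\rho_*^{2q-1}$, $\partial_\rho\dot\varphi = \Phi'(\rho_*^2)\cdot 2\rho_* + O(\rho_*^{2q-3}) \approx 2l_1\rho_*$, and $\partial_\varphi\dot\varphi = -2q\alpha\rho_*^{2q-2}\sin(2q\varphi_*) = 0$. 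Hence the linearization is off-diagonal and its determinant is $-(\pm 2q\alpha\rho_*^{2q-1})(2l_1\rho_* + \cdots) = \mp 4q\,\alpha l_1\,\rho_*^{2q} + \text{h.o.t.}$. So consecutive values of $k$ alternate the sign of $\cos(2q\varphi_*) = (-1)^k$, hence alternate the sign of the determinant, hence alternate between saddle ($\det<0$) and center ($\det>0$): exactly $2q$ of each, alternating around the circle, as claimed. A small care point is that $\varphi$ is singular at $\rho=0$; but all the new equilibria sit at $\rho_*\approx\sqrt{-\mu/l_1} > 0$, away from the origin, so the polar chart is legitimate there.

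Finally I would upgrade from the truncated flow to the full flow~\eqref{eq401} with its $O(|z|^{2q+1})$ tail. The new equilibria live at $|z|\sim\sqrt{|\mu|/|l_1|}$, so the relevant scaling is $z = \sqrt{|\mu|}\,w$; after this rescaling and a rescaling of time, the truncated part is an $O(1)$ system with $4q$ hyperbolic-in-the-$\det\neq 0$-sense nondegenerate equilibria, and the tail becomes a perturbation of relative size $O(|\mu|)$ (a positive power of $\mu$). By the implicit function theorem each nondegenerate equilibrium of the truncation persists, stays $O(|\mu|)$-close to its unperturbed location (so still near the circle $|z| = \sqrt{-\mu/l_1}$ and still equidistant at angle $\pi/(2q)$ up to higher order), and keeps the sign of $\det$ of its linearization, hence its center/saddle type; the symmetry $z\to -z$ (which the full family respects) together with the resonant structure forbids extra equilibria in $U$ for $\mu$ small, so the count is exactly $4q$. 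I expect the main obstacle to be the last point — genuinely ruling out spurious equilibria of the full flow in all of $U$ (not just near the circle), which requires a quantitative argument: on the annular region where $|z|$ is comparable to $\sqrt{|\mu|}$ the rescaled analysis applies, while on the complementary region one shows $\dot\varphi$ is bounded away from zero (dominated by $\mu + l_1|z|^2$), so no equilibria can hide there; assembling these estimates carefully is the crux, the rest being standard normal-form and implicit-function-theorem bookkeeping.
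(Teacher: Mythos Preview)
Your proposal is correct and follows essentially the same route as the paper: pass to polar coordinates (the paper uses $r=|z|^2$ rather than $\rho=|z|$, and records the Hamiltonian $H(r,\psi)$ explicitly), locate the nonzero equilibria via $\sin(2q\varphi)=0$ together with the radial balance $\mu+l_1\rho^2\approx 0$, and then linearize to read off the alternating saddle/center pattern from the sign of $\cos(2q\varphi_*)$. You in fact go slightly further than the paper, which works only with the truncated form~(\ref{eq425}) and does not explicitly argue persistence under the $O(|z|^{2q+1})$ tail or the exclusion of spurious equilibria elsewhere in $U$.
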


After the bifurcation, the flow~\eqref{eq401} possesses a garland $G_{2q,2q}$ consisting of $4q$ alternating saddle and elliptic equilibria uniformly located around the point $z=0$, see Fig.~\ref{bifpdf1p4q}. Accordingly, for the map, this means the emergence of a garland $G^{2,2}_{q,q}$ with four orbits of period $q$, two of them are saddles and the other two are elliptic orbits. One of these orbits is symmetric to the other orbit of the same type with respect to the origin (after the change $x\to -x$ and $y\to -y$). We call them \emph{mutually centrally symmetric} orbits. Namely,  these periodic orbits are not centrally symmetric, since they are of odd periods, but along with the other orbits (of the same type) they compose two  pairs of mutually centrally symmetric orbits.

After that, we study bifurcations of symmetric elliptic points in families $\bar z= f(z,z^*, \mu_1, \mu_2)$ that destroy the central symmetry, i.e. $f(z,z^*, \mu_1, \mu_2) \neq -f(z,z^*, \mu_1, \mu_2)$ but keep conservativity and reversibility.
In this case, it is natural to consider the following perturbation of  system~(\ref{eq401}), compare with~(\ref{eq40}),
\begin{equation}\label{eq402}
\dot z =
i\mu_1 z + i \Phi(|z|^2)z + i\mu_2 (z^*)^{q-1}  + i\alpha (z^*)^{2q-1} + O\left(|z|^{2q+1}\right) + O\left(|\mu||z|^{q+1}\right)
\end{equation}
where $\mu_1$ and $\mu_2$ are real parameters, $\alpha\neq 0$ is real. When $q$ is odd and $\mu_2\neq 0$, the system loses the central symmetry, 
however,
the normal form remains conservative  and reversible. 
We obtain the following result for this family.

\begin{theorem}
Let $V_0$ be a small neighbourhood of the origin in the $(\mu_1,\mu_2)$-parameter plane. In $V_0$ there exist bifurcation curves
\begin{equation}\label{eq540}
\begin{array}{l}
L_{pf}=\left\{(\mu_1, \mu_2)\in V_0 :\; \mu_2 = \pm 2\alpha \left(-\frac{\mu_1}{l_1}\right)^{q/2}(1+O(\mu_1)) \right\}, \\ L_{p:q}=\{(\mu_1, \mu_2)\in V_0: \mu_1=0\}
\end{array}
\end{equation}
which correspond  to a pitchfork bifurcation of $q$ centrally symmetric nonzero equilibria and  the $p$:$q$~resonance, respectively. Curves $L_{pf}$  and
$L_{p:q}$ divide $V_0$ into 4 domains $I-IV$ where~(\ref{eq402}) exhibits the following dynamics  
\begin{itemize}
\item[(a)] For  $(\mu_1,\mu_2)\in I$,  there are no nonzero equilibria; 
\item[(b)] For  $(\mu_1,\mu_2)\in II\cup IV$, there is  a garland $G_{q,q}$ that contains $q$ centrally symmetric saddles and $q$ centrally symmetric centers; 
\item[(c)] For $(\mu_1,\mu_2)\in III$, there are $4q$ nonzero  equilibria which form a garland $G_{2q,2q}$.
\begin{itemize}
\item If $l_1\alpha >0$, at crossing the curve $L_{pf}$, $q$ centrally symmetric centers undergo a supercritical pitchfork bifurcation. As a result there appear $2q$ centrally symmetric saddles and $2q$ non-symmetric centers inside the garland $G_{2q,2q}$. In domain $III$ there are two bifurcation curves $g_1$ and $g_2$ related to heteroclinic connections between the $2q$ saddles, and, thus, in the subdomain of $III$, between the curves $g_1$ and $g_2$ 
garland $G_{2q,2q}$ rotates.

\item If $l_1\alpha <0$, at crossing the curve $L_{pf}$, $q$ centrally symmetric saddles undergo a subcritical pitchfork bifurcation after which in domain $III$, there appears a garland $G_{2q,2q}$ that contains $2q$ centrally symmetric centers and $2q$ non-symmetric saddles.
%
\end{itemize}
\end{itemize}\label{th:th2}
\end{theorem}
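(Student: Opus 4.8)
Since the family~\eqref{eq402} is conservative, its flow is Hamiltonian. In polar coordinates $z=\rho e^{i\varphi}$ one checks that the truncated vector field has a first integral
\begin{equation*}
H(\rho,\varphi;\mu_1,\mu_2)=\mu_1\rho^2+\tfrac{l_1}{2}\rho^4+O(\rho^6)+\tfrac{2\mu_2}{q}\,\rho^q\cos q\varphi+\tfrac{\alpha}{q}\,\rho^{2q}\cos 2q\varphi+(\text{h.o.t.}),
\end{equation*}
that the flow is equivariant under $\varphi\mapsto\varphi+2\pi/q$ and reversible under each reflection $\varphi\mapsto 2\pi k/q-\varphi$, and that $z\mapsto e^{i\pi/q}z$ conjugates the system at $(\mu_1,\mu_2)$ to the one at $(\mu_1,-\mu_2)$. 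Thus, up to the symmetry $\mu_2\mapsto-\mu_2$, it suffices to take $\mu_2\ge0$, and to reduce the whole statement to the study of the critical points of $H$ and of the level curves of $H$ in a fixed small disc $U$; note that $z=0$ is always a centre for $\mu_1\ne0$.

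The plan is first to locate the nonzero equilibria. The equation $\partial_\varphi H=0$ factorises, to leading order, as $\rho^{q}\sin q\varphi\,\bigl(\mu_2+2\alpha\rho^{q}\cos q\varphi\bigr)=0$, so the equilibria split into an \emph{axial} family, $\varphi=\pi k/q$ (whence $\cos q\varphi=\varepsilon:=\pm1$), and an \emph{off-axial} family, $\cos q\varphi=-\mu_2/(2\alpha\rho^{q})$. On the axial family the equation $\partial_\rho H=0$ reads $\mu_1+l_1\rho^2+\varepsilon\mu_2\rho^{q-2}+\alpha\rho^{2q-2}+\cdots=0$, whose left side is strictly monotone in $\rho$ on $(0,\rho_{\max})$ once $U$ is small; by the implicit function theorem it has a unique positive root $\rho_\varepsilon(\mu_1,\mu_2)$ with $\rho_\varepsilon^2=-\mu_1/l_1+\cdots$ when $\mu_1 l_1<0$, and none when $\mu_1 l_1>0$. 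This already yields $L_{p:q}=\{\mu_1=0\}$ and the absence of nonzero equilibria in $I=\{\mu_1l_1>0\}$. Substituting $\cos q\varphi=-\mu_2/(2\alpha\rho^{q})$ into $\partial_\rho H=0$ collapses it, after cancellation, to $\mu_1+l_1\rho^2+\cdots=\alpha\rho^{2q-2}+\cdots$, again monotone, with a single root $\rho_0(\mu_1)$, $\rho_0^2=-\mu_1/l_1+\cdots$, for $\mu_1l_1<0$; the attached $2q$ equilibria exist iff $|\mu_2/(2\alpha\rho_0^{q})|<1$, i.e.\ iff $|\mu_2|<2|\alpha|(-\mu_1/l_1)^{q/2}(1+O(\mu_1))$, which is exactly $L_{pf}$. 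Counting the roots ($q$ with $\cos q\varphi=1$, $q$ with $\cos q\varphi=-1$, and $2q$ off-axial ones) now gives $0$ nonzero equilibria in $I$, $2q$ in $II\cup IV$, and $4q$ in $III$; away from $\rho\approx\rho_0$ the monotone left sides keep $\partial_\rho H$ bounded away from $0$, so there are no others. Every equilibrium found is simple, hence it persists, together with $L_{p:q}$ and $L_{pf}$ (including the $O(\mu_1)$ correction), under the reversibility-preserving $O$-terms.

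Next I would read off the types. As the flow is Hamiltonian, a nondegenerate equilibrium is a centre or a saddle according to the sign of $\det\operatorname{Hess}H$ there. Using the equilibrium relations, $\operatorname{sgn}\partial^2_{\rho\rho}H=\operatorname{sgn}l_1$ at every equilibrium, while $\partial^2_{\varphi\varphi}H=-2q\rho^{q}(\varepsilon\mu_2+2\alpha\rho^{q})+\cdots$ at an axial one and $\partial^2_{\varphi\varphi}H=4q\alpha\rho^{2q}\sin^2 q\varphi+\cdots$ at an off-axial one, with a higher-order cross term. Since $\varepsilon\mu_2+2\alpha\rho^{q}$ keeps a fixed sign throughout each of $I$--$IV$, this gives: in $II\cup IV$ the $2q$ axial equilibria split into $q$ saddles and $q$ centres (the orbits $\{\varphi=2\pi k/q\}$ and $\{\varphi=(2k+1)\pi/q\}$ being of opposite type), a garland $G_{q,q}$; in $III$ one gets $2q$ saddles and $2q$ centres, the axial (those on the reflection axes) and off-axial equilibria of opposite type and the partition determined by $\operatorname{sgn}(l_1\alpha)$ exactly as claimed, the types alternating around the garland because the extrema of $H|_{\rho\approx\rho_0}$ alternate. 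On $L_{pf}$ an off-axial equilibrium collides with an axial one ($\cos q\varphi\to\pm1$, $\partial^2_{\varphi\varphi}H\to0$): the axial equilibrium becomes a degenerate critical point, and since it lies on a reflection axis along which the flow is $\mathbb Z_2$-reversible, the local bifurcation is the $\mathbb Z_2$-equivariant pitchfork; its cubic coefficient, i.e.\ the transverse expansion of $H$, comes out supercritical for $l_1\alpha>0$ and subcritical for $l_1\alpha<0$, so with the type count above one obtains precisely the two scenarios of item~(c). Crossing $L_{p:q}=\{\mu_1=0\}$ all garland equilibria shrink onto $z=0$ ($\rho_0\to0$), which is the $p{:}q$ resonance of the initial map; together $L_{pf}$ and $L_{p:q}$ cut $V_0$ into $I$--$IV$.

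It remains to produce the curves $g_1,g_2\subset III$. As the flow is a one-degree-of-freedom Hamiltonian system, its phase portrait is the foliation by level curves of $H$, and the garland's separatrix skeleton --- i.e.\ which separatrices of the $2q$ saddles are heteroclinically joined --- is controlled by the three critical values $H_+$, $H_-$ (the common values of $H$ on the $q$ axial saddles with $\cos q\varphi=+1$, resp.\ $-1$) and $H_c$ (the common value on the $2q$ off-axial centres), with $H_+=H_-$ and $H_\pm\ne H_c$ on $\{\mu_2=0\}$. Tracking, as $\mu_2$ runs across $III$ and the off-axial centres sweep from near one family of reflection axes to the other, how the separatrix loop bounding each island reattaches to the neighbouring saddles, one should find that the skeleton reconnects along exactly two curves $g_1,g_2$, mirror images under $\mu_2\mapsto-\mu_2$ and cut out by coincidence conditions among $H_+,H_-,H_c$ (regular zero sets of explicit smooth functions of $(\mu_1,\mu_2)$, hence persistent under the $O$-terms); between $g_1$ and $g_2$ the garland carries no invariant heteroclinic cycle and its skeleton ``rotates'' with $\mu_2$. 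I expect this last, essentially combinatorial and topological, step --- proving that there are exactly two such curves and identifying the reconnection pattern --- to be the main obstacle; the preceding steps are the implicit function theorem together with the standard pitchfork normal form, and are routine once the Hamiltonian structure of~\eqref{eq402} is exploited.
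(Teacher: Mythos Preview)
Your proposal is correct and follows essentially the same route as the paper's own argument in Section~\ref{sec:pfork}: pass to polar coordinates, write the Hamiltonian, factor the equilibrium equation as $\sin(q\psi)\bigl(\mu_2+2\alpha r^{q/2}\cos(q\psi)\bigr)=0$ to separate the axial (centrally symmetric) from the off-axial (non-symmetric) families, and read off $L_{pf}$ from the constraint $|\cos(q\psi)|\le 1$. Your treatment is in fact more complete than the paper's in two respects: you carry out the Hessian/type analysis explicitly (the paper defers this to the eigenvalue computation done in the proof of Theorem~\ref{th:th1}), and you record the extra symmetry $\mu_2\mapsto-\mu_2$ induced by the rotation $z\mapsto e^{i\pi/q}z$, which the paper does not use.

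The only real divergence is in the discussion of $g_1,g_2$. You propose to locate them via coincidence conditions among the critical values $H_+,H_-,H_c$; the paper instead just asserts that the separatrices of the interior and exterior saddle groups reconnect at two moments and remarks that ``one can calculate the corresponding Melnikov integrals in order to approximate rigorously these moments (which is out of scope of this paper).'' Neither argument is made rigorous, and the paper explicitly declares this step outside its scope, so your honest flagging of it as ``the main obstacle'' is appropriate. One caution: since within each axial saddle group all $q$ saddles share the same $H$-value by the $q$-fold symmetry, and $H_+=H_-$ already on $\{\mu_2=0\}$, the relevant bifurcation is not a simple level-value coincidence but a change in \emph{which} separatrix of a given saddle lands on which neighbouring saddle; this is a separatrix-reconnection phenomenon rather than a level-set collision, which is why the paper invokes Melnikov-type analysis rather than critical-value comparison.
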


\begin{figure}[t]
\centerline{\includegraphics[width=16cm]{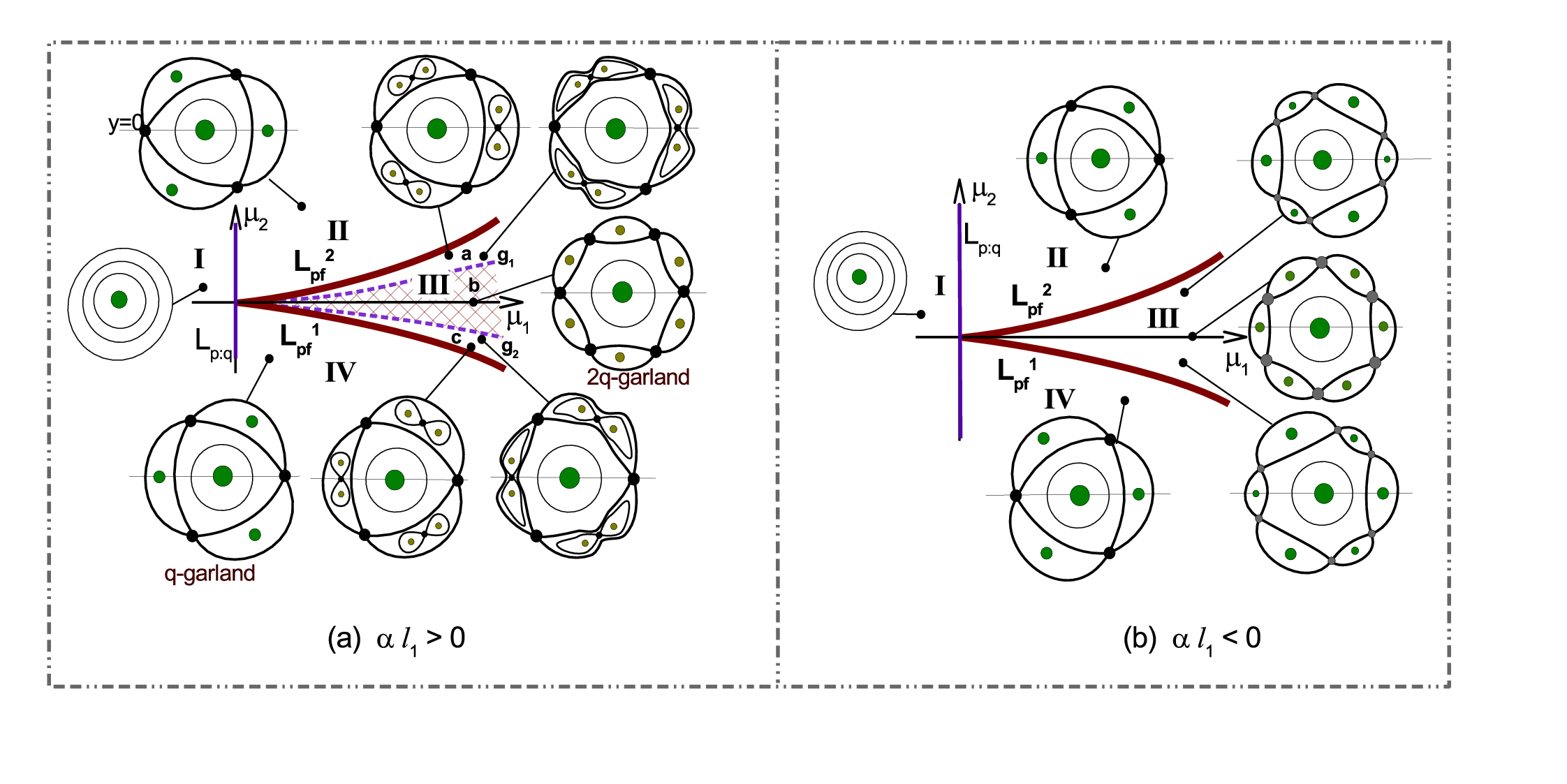}}
\caption{{\footnotesize A conservative bifurcation of the $p$:$q$ resonance with odd $q\geq 3$ for a two-parameter family where the central symmetry is broken for $\mu_2=0$. Two bifurcation diagrams for the 1:3 resonance are illustrated for the cases: (a) $l_1\alpha >0$ and (b) $l_1\alpha <0$. Note that Theorem~\ref{th:th1} corresponds to the case $\mu_2=0$
}}
\label{bdiag1p3n2}
\end{figure}

One can see a certain difference between the cases $l_1\alpha >0$ in Fig.~\ref{bdiag1p3n2}a and $l_1\alpha <0$ in Fig.~\ref{bdiag1p3n2}b. In the first case, $l_1\alpha >0$, the transition from garland $G_{q,q}$ to garland $G_{2q,2q}$ takes place after the supercritical pitchfork bifurcation in curve $L_{pf}$, however, later heteroclinic bifurcations (curves $g_1$ and $g_2$) take place when the separatrices of $q$ interior saddles connect the separatrices of $q$ exterior saddles, and the phenomenon of splitting of separatrices occurs (see~\cite{DGG16,DGG20} and the references therein for more details about this phenomenon). In the second case, $l_1\alpha >0$, the transition from  $G_{q,q}$ to $G_{2q,2q}$ holds after the subcritical pitchfork bifurcation in the curve $L_{pf}$  when there appear $q$ new islands with 2 saddles and 1 center.

Accordingly, for the map, Theorem~\ref{th:th2} indicates the bifurcation mechanism of transition from the standard symmetric garland $G_{q,q}^{1,1}$ to a garland $G_{q,q}^{2,2}$ which additionally contain two centrally non-symmetric orbits of period $q$.

The results of Theorem~\ref{th:th2} can be further generalized to the case of perturbations that destroy both the symmetry and conservativity but keep reversibility. In Section~\ref{sec:NonCons}, we discuss symmetry-breaking bifurcations for reversible perturbations which lead to the appearance of garlands with reversibly non-symmetric orbits. We show that reversibly non-symmetric equilibria appear due to reversible pitchfork bifurcations~\cite{LT12}. As a result of this bifurcation, there can emerge two different kinds of garlands: $G'_{2q,q,q}$ which consists of $2q$ conservative reversibly symmetric centers and $q$ reversibly symmetric pairs of saddles with positive and negative divergence (similar to the one in Fig.~\ref{bdiag1p3n2}b, but the saddles are not conservative) o  a new type of garland $G'_{2q,q,q}$ which consists of $2q$ conservative reversibly symmetric saddles and $q$ reversibly symmetric pairs of stable and unstable foci (see Section~\ref{sec:NonCons} and the bifurcation diagram in Fig.~\ref{bdrev1p3v2}).
For the map, these garlands correspond to discrete garlands $G'_{2q,q,q}$ with two reversibly symmetric $q$-periodic elliptic (saddle) orbits and $q$-periodic non-conservative saddles (stable and unstable foci, respectively) which compose reversibly symmetric pairs.

\section{Normal forms for the centrally symmetric $p$:$q$ resonance}\label{sec:NFRes}

We consider a two-dimensional
map of the form~(\ref{eq:2map}). We assume that the map satisfies $(A_1)-(A_4)$.
When studying dynamics and bifurcations of the fixed point $z=0$ (when $|\lambda|=1$), it is important to bring map~(\ref{eq:2map}) to the form as simple as possible by means of sufficiently smooth changes of coordinates, ideally, to the so-called normal form which contains only resonant terms.

We say that in~(\ref{eq:2map}) the monomial $A_{m,k} z^m (z^*)^k$ is resonant if it cannot vanish by any $C^r$-change of coordinates with $r> m+k$ in any neighborhood of $z=0$. Let us consider  the following auxiliary map
$$
\bar z = \lambda z + A_{m,k} z^m (z^*)^k.
$$
It is well-known that the nonlinear term $A_{m,k} z^m (z^*)^k$ is resonant if the following resonant condition is true
\begin{equation}
\lambda^m (\lambda^*)^k -\lambda = 0.
\label{eq:rezcond}
\end{equation}
This can be seen by means of change $z = w + C_{m,k} w^m (w^*)^k$. After this change, the map takes the form
$$
\bar w = \lambda w +  \left[ \lambda C_{m,k} - C_{m,k}\lambda^m (\lambda^*)^k +  A_{m,k}\right] w^m (w^*)^k + O(|w|^{k+m+1}).
$$
If~\eqref{eq:rezcond} does not hold, one can take 
$$
C_{m,k} = \frac{A_{m,k}}{\lambda^m (\lambda^*)^k -\lambda}
$$
in order to nullify the square bracket, and, thus, the corresponding term is no resonant. The square bracket does not vanish if the resonant condition~\eqref{eq:rezcond} is fulfilled, and, therefore, the monomial $A_{m,k} z^m (z^*)^k$ is resonant.

In the present paper we consider the case $
\displaystyle \lambda = e^{i\;2\pi\frac{p}{q}},
$
where $p$ and $q$ are mutually prime natural integers, $p<q$ and $q\geq 3$. Then, condition~(\ref{eq:rezcond}) implies that
$$
e^{i 2\pi \frac{p}{q} (m -1 - k)} = 1,
$$
and, since $\frac{p}{q}$ is an irreducible fraction, we obtain 
\begin{equation}
m-1-k = jq
\label{eq:rez0}
\end{equation}
for some integer $j$.

For map~(\ref{eq:2map}) that satisfies $(A_1)-(A_4)$, 
condition~\eqref{eq:rez0} immediately implies that if $m+k$ is even, term $A_{m,k} z^m (z^*)^k$ is not resonant and can vanish after some change of coordinates. Hence, in a map of the form~(\ref{eq:2map}) with $(A_1)-(A_4)$, we can nullify all monomials of even degrees. Thus, we can additionally assume
\begin{equation}
A_{m,k} \equiv 0, \;\; \mbox{when $m+k$ is even}.
\label{eq:zeros}
\end{equation}
This property greatly simplifies the problem of studying bifurcations of $p$:$q$ resonant elliptic points,  when $\displaystyle \lambda = e^{i\;2\pi \frac{p}{q}}$, $p$ and $q$ are mutually prime natural numbers, $p<q$ and $q$ is {\em odd}.

Let us play further with the resonant condition (\ref{eq:rez0}) in order to obtain all the resonant terms up to order $2q$ taking into account the identities~(\ref{eq:zeros}). We can see the following:

1) For $j=0$, one has $k = m-1$  and, thus, the terms
$$
A_{m,m-1}z^m(z^*)^{m-1} = A_{m,m-1}|z|^{2m-2} z, m=2,3,...,
$$
are nonremovable by any change and resonant. These are the so-called {\em identical resonances}. Note also that all such monomials have odd degrees.

2) For $j=1$ in~(\ref{eq:rez0}), one gets $m = q  + k+ 1$. Then for all possible $k=0,1,...,n,...$, we obtain non-resonant monomials $A_{q+k+1,k} z^{q+k+1}(z^*)^k$
with degrees $m+k=q+2k+1$ which are always even if $q$ is odd. Thus, by~(\ref{eq:zeros}), all these monomials have zero coefficients:  $A_{q+k+1,k}\equiv 0$.

3) For $j=-1$ in~(\ref{eq:rez0}), $k = q + m -1$. For all possible $m=0,1,...,n,...$, we get the non-resonant monomials $A_{m,m+q-1} z^m (z^*)^{m+q-1}$
with even degrees $m+k=2m +q-1$ if $q$ is odd. Thus,
condition~(\ref{eq:zeros}) again implies that the coefficients of all these monomials are zero: $A_{m,m+q-1}\equiv 0$.

4) For $j=2$ in~(\ref{eq:rez0}), one gets 
$m = 2q +1 +k$. For $k=0$, we obtain the resonant monomial
$A_{2q+1,0} z^{2q+1}$
of an odd degree. Note that for $k\geq 1$ and $j=2$, the other resonant monomials have degrees greater than $2q+3$.

5) For $j=-2$ in~(\ref{eq:rez0}), one has $k = 2q + m -1$. For $m=0$, 
we get the resonant monomial $A_{0,2q-1} (z^*)^{2q-1}$. 
The other resonant monomials for  $m\geq 1$ have degrees greater $2q$.

For the other $|j|\geq 3$, it is easy to show that the resonant terms have degrees more than $3q-1$ and, therefore, they are not included in the $2q$-order normal form. 
Hence, we note that among the non-identical resonances, the resonant term $A_{0,2q-1} (z^*)^{2q-1}$ is of  the smallest degree. By assumption~$(A_4)$, we know that $A_{0,2q-1} \neq 0$.

Therefore, the main $2q$-order normal form of map (\ref{eq:2map}) with $(A_1)-(A_4)$ can be written in the form~\eqref{eq:2mapnf0},  
where $i \lambda \Omega(|z|^2) z$ includes all the identical resonances up to order $2q$. Note that $\Omega(t)$ is a real-valued function such that $\Omega(0) = 0$.

As well-known, the system of ODEs
\begin{equation}\label{eq4}
\dot z =
\tilde\Phi(|z|^2)z + \delta (z^*)^{2q-1} + O\left(|z|^{2q+1}\right) ,
\end{equation}
where $\tilde\Phi = e^{- i\; 2\pi p/q} \Omega$ and $\delta = e^{- i\;2\pi p/q} A_{0,2q-1}$, can be considered as the flow normal form for map~(\ref{eq:2mapnf0}) near the fixed point $z=0$.
Indeed, if we consider the map $\tilde f = f\circ R_\phi$, where
$R_\phi$ is the rotation by the angle $\phi = - 2\pi p/q$, we obtain that $\tilde f$ takes the form $\bar z =  z + \Phi(|z|^2)z + \delta (z^*)^{2q-1} + O\left(|z|^{2q+1}\right)$ (we need to make change $z \to z e^{- i\;2\pi p/q}$ in the right hand side of~(\ref{eq:2mapnf0})). Thus, $\tilde f$ is embedded into flow~(\ref{eq40}), and the dynamics of map $f$ is easily recovered if we know the dynamics of $\tilde f$. In turn, the latter can be studied via the analysis of bifurcations of flows close to flow~(\ref{eq4}), see, for example, the flow normal forms~\eqref{eq401}, \eqref{eq402} and~\eqref{eq403}.

We recall that the system of the form $\dot z = F(z,z^*), \dot z^* = F^*(z,z^*)$ is Hamiltonian if the divergence is identical zero. Namely,
$$
\mbox{Div} \equiv \frac{\partial \dot z}{\partial z} + \frac{\partial \dot z^*}{\partial z^*} \equiv 0.
$$
For flow~(\ref{eq4}), the complex conjugation has the form
$\dot z^* = \Phi^*(|z|^2)z^* + \delta^* (z)^{2q-1} + O\left(|z|^{2q+1}\right)$. Then we find that system~(\ref{eq4}) is Hamiltonian if
$\tilde\Phi + \tilde\Phi^* \equiv 0$ and, therefore, $\tilde\Phi$ should be purely imaginary  and contain only the terms corresponding to the identical resonances of the normal form~\eqref{eq:2mapnf0}. For example, we can write 
$
\tilde\Phi(|z|^2) = i \Phi(|z|^2),
$
where $\Phi$ is given in~\eqref{eq:Phi} with all the coefficients $l_i$ being real. By assumption~$(A_4)$, the non-degeneracy  
conditions~\eqref{eq:nondeg_l1delta}  
are fulfilled.

We would like to give 
some remarks on embedding of map~(\ref{eq:2map}) into flow.

\begin{remark}
Let us consider map $f$ of the form~\eqref{eq:2map} with~$(A_1)-(A_4)$ 
It is well known that map $f^q$ can be locally embedded into flow. However, certain difficulties take place with calculation of the nonlinear terms of $f^q$.
Instead of this, one can embed into flow the auxiliary map $\tilde f = f\circ R_\phi$, where $R_\phi$ is the linear rotation by angle $\phi = - 2\pi\frac{p}{q}$. Evidently, if we
consider map $\tilde f$, we do not lose information on dynamics of map $f$, and, moreover, we can extract this information from the corresponding flow. Besides, map $\tilde f$ is easily written in the form
$$\bar z =  z +  \sum \alpha_{m,k} z^m (z^*)^k,$$
where
$$
\alpha_{m,k} = A_{m,k}  e^{i\; 2\pi\frac{p}{q}(k-m)}. 
$$
\end{remark}

\begin{remark}[embedding into flow in the resonant case]
Consider map $f$ of the form
\begin{equation}
\bar z = \nu z +  \sum A_{m,k} z^m (z^*)^k,
\label{eq:2mapnf101}
\end{equation}
where $\nu^q = 1$ for the integer $q\geq 3$, i.e. we can take $\nu = e^{i 2\pi\frac{p}{q}}$. We also assume that~\eqref{eq:2mapnf101} contains only \emph{resonant monomials} $A_{m,k} z^m (z^*)^k$ which means, by virtue of~(\ref{eq:rez0}), that $m-k = jq + 1$.

First, let us consider map $\tilde f = f\circ R_{\phi}$, where $\phi = -2\pi\frac{p}{q}$. Then map $\tilde f$ takes the following form
after the changes $z\to \nu^{-1} z$ and $z^*\to \nu z^*$ in the right hand side of~(\ref{eq:2mapnf101}) 
\begin{equation}
\bar z = z +  \sum A_{m,k} z^m (z^*)^k \nu^{k-m} =  z +  \nu^{-1}\sum A_{m,k} z^m (z^*)^k ,
\label{eq:2mapnf201}
\end{equation}
since $m-k = jq + 1$ and, thus, $\nu^{k-m} = \nu^{-jq-1}= \nu^{-1}$. Evidently, map (\ref{eq:2mapnf201}) is embedded near $z=0$ into the flow
\begin{equation}
\dot z = \nu^{-1}\sum A_{m,k} z^m (z^*)^k ,
\label{eq:2mapnf20fl}
\end{equation}

Second, one can show that 
map $f^q$ can be embedded into flow of the form
$$\dot z = q \nu^{-1}\sum A_{m,k} z^m (z^*)^k ,
$$
which coincides with flow~(\ref{eq:2mapnf20fl}) after the time scaling $t_{new} = q t$.  \\
\end{remark}

\section{Bifurcations in the centrally symmetric case. Proof of Theorem~\ref{th:th1}.}\label{sec:SymmCase}

In the class of conservative systems with the central symmetry $z\to -z, z^*\to -z^*$, conditions~(\ref{eq:nondeg_l1delta}) are generic and, thus, bifurcations of a symmetric fixed point with eigenvalues~$e^{\pm i\varphi}$, where $\varphi = 2\pi p/q$ and $q$ is odd, can be studied in one-parameter families which unfold the degeneracy corresponding to $\varphi$ being rational multiple of $2\pi$, i.e. $\varphi =  2\pi p/q$. This means that we can consider the one-parameter family~\eqref{eq401},
where $\mu$ is a real parameter related to a perturbation of $\varphi$, $\Phi$ is a real-valued function corresponding to the identical resonances of~\eqref{eq:2map}, and $i\alpha$ is the coefficient of the least-order non-identical resonance. We  also write the complex conjugate 
$$\dot z^* =
-i\mu z^* - i\Phi(|z|^2)z^* - i \alpha z^{2q-1} + O\left(|z|^{2q+1}\right) ,
$$

\subsection{Centrally symmetric bifurcations in the reversible conservative case}\label{sec:SymmCase_1}

We consider the flow of the form
\begin{equation}
\dot z = i\mu z + \sum\limits_{m,k\geq 0, \\ m+k\geq 2} \alpha_{m,k} z^m (z^*)^k ,
\label{eq:2fl20}
\end{equation}

\begin{lm}
If system (\ref{eq:2fl20}) is reversible with respect to the involution $z\to z^*$, $z^*\to z$ and time reversal $t\to -t$, then all the coefficients $\alpha_{m,k}$ are purely imaginary.
\label{lm:revfl}
\end{lm}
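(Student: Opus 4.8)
The plan is to write out what reversibility with respect to the involution $h: z \mapsto z^*$ combined with time reversal $t \mapsto -t$ means for the vector field, and then compare coefficients monomial by monomial. Write the flow as $\dot z = F(z,z^*)$ with $F(z,z^*) = i\mu z + \sum_{m+k\geq 2} \alpha_{m,k} z^m (z^*)^k$, and its conjugate $\dot z^* = F^*(z,z^*) = -i\mu z^* + \sum_{m+k\geq 2} \alpha_{m,k}^* (z^*)^m z^k$. Reversibility means that applying $h$ to a solution and reversing time yields another solution; equivalently, if $w = z^*$ then the $w$-equation obtained from the original system under $t \to -t$ must coincide with the original $z$-equation. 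Concretely, substituting $z \to z^*$, $z^* \to z$ and flipping the sign of the time derivative, the condition is
\begin{equation}
-F^*(z^*,z)\big|_{\text{swap}} \;=\; F(z,z^*),
\end{equation}
which, after carrying out the swap inside $F^*$, reads $-\,\overline{F(z^*,z)} = F(z,z^*)$ where the bar denotes complex conjugation of coefficients only. First I would verify this functional identity is the correct transcription of $h$-reversibility by checking it on the linear term: the left side gives $-\overline{i\mu z^*}\big|_{z^*\to z}$... more carefully, $F(z^*,z)$ has linear part $i\mu z^*$, its conjugate-of-coefficients is $-i\mu z^*$, and negating gives $i\mu z^*$; but we want this as a function of the swapped variables, so the linear part of the right-hand side of the identity is indeed $i\mu z$, matching $F$. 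Good.

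Next I would expand the identity $-\,\overline{F(z^*,z)} = F(z,z^*)$ termwise. The monomial $\alpha_{m,k} z^m (z^*)^k$ in $F$ contributes, on the left side, the term coming from $F(z^*,z)$: replacing $z$ by $z^*$ and $z^*$ by $z$ turns it into $\alpha_{m,k} (z^*)^m z^k$; taking conjugate of the coefficient gives $\alpha_{m,k}^* (z^*)^m z^k$; negating gives $-\alpha_{m,k}^* (z^*)^m z^k = -\alpha_{m,k}^* z^k (z^*)^m$. For the identity to hold, this must equal the $z^k(z^*)^m$-term of $F$, namely $\alpha_{k,m} z^k (z^*)^m$. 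Hence $\alpha_{k,m} = -\alpha_{m,k}^*$ for all $m,k$. Taking $m = k$ gives $\alpha_{m,m} = -\alpha_{m,m}^*$, i.e. the identical-resonance coefficients are purely imaginary; and for general $m \neq k$ the relation $\alpha_{k,m} = -\alpha_{m,k}^*$ pairs up coefficients but does not by itself force each one to be imaginary.

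The main obstacle — and the point that needs the extra hypothesis hidden in the ambient setting — is exactly this: the bare identity only yields $\alpha_{k,m} = -\overline{\alpha_{m,k}}$, so to conclude that every $\alpha_{m,k}$ is purely imaginary one must also invoke that $F$ is a normal form containing only \emph{resonant} monomials, for which $m - k \equiv 1 \pmod q$ by~\eqref{eq:rez0}, together with the symmetry constraint~\eqref{eq:zeros}. In the resonant normal form the only monomials present are the identical resonances $z^m(z^*)^{m-1}$ (for which the relation $\alpha_{k,m}=-\overline{\alpha_{m,k}}$ does not apply since $(k,m)=(m-1,m)$ is not itself a resonant index) — wait, rather, I would argue that the pairing $(m,k)\leftrightarrow(k,m)$ sends a resonant index $m-k=jq+1$ to $k-m=-jq-1$, which is resonant only when $j=0$, i.e. only the identical resonances are "self-paired", and for those $\alpha_{m,m}=-\overline{\alpha_{m,m}}$ forces them imaginary; the remaining resonant coefficients would be paired with non-resonant (hence zero) ones, forcing \emph{them} to vanish too, so they are trivially purely imaginary. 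I would present the argument in this order: (1) transcribe $h$-reversibility into the functional identity; (2) derive $\alpha_{k,m} = -\overline{\alpha_{m,k}}$ by matching monomials; (3) specialize to the resonant-normal-form structure to upgrade this to "purely imaginary" for all surviving coefficients; and flag step (3) as the place where the hypotheses $(A_1)$–$(A_4)$ and the preceding normal-form reduction are genuinely used.
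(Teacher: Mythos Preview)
Your transcription of reversibility into a functional identity goes wrong at the conjugation step. You write the condition as $-\overline{F(z^*,z)} = F(z,z^*)$ and then declare the bar to mean ``complex conjugation of coefficients only'', leaving the monomials $(z^*)^m z^k$ untouched. But $z$ and $z^*$ are not independent formal variables here: $z^* = \bar z$, so genuine complex conjugation sends $(z^*)^m z^k$ to $z^m (z^*)^k$. With the correct conjugation one gets $\overline{F(z^*,z)} = -i\mu z + \sum \alpha_{m,k}^*\, z^m (z^*)^k$, and matching $-\overline{F(z^*,z)}$ against $F(z,z^*)=i\mu z+\sum \alpha_{m,k}\, z^m(z^*)^k$ yields directly $\alpha_{m,k} = -\alpha_{m,k}^*$ for \emph{every} pair $(m,k)$, i.e.\ every coefficient is purely imaginary. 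This is exactly the paper's two-line argument.

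Your ``coefficients only'' reading instead produces the weaker relation $\alpha_{k,m} = -\alpha_{m,k}^*$, which then forces you into the resonant-normal-form detour of step~(3). That detour is not only unnecessary but unavailable: the lemma is stated for the general system~\eqref{eq:2fl20}, with no assumption that it has been reduced to normal form, so you are not entitled to invoke~\eqref{eq:rez0} or~\eqref{eq:zeros}. The symptom of the error is already visible in your own check of the linear term: you obtain $i\mu z^*$ on the left and $i\mu z$ on the right and then paper over the discrepancy with an ad hoc ``swapped variables'' remark. That mismatch is precisely the $z\leftrightarrow z^*$ swap that true complex conjugation performs and that you suppressed.
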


\begin{proof} The reversibility of~(\ref{eq:2fl20}) means that it remains invariant under the change $t\to -t$ and $z\to z^*$, $z^*\to z$.
Indeed, after this change, system~(\ref{eq:2fl20}) is rewritten as $- \dot z^* = i\mu z^* + \sum \alpha_{m,k} (z^*)^m (z)^k$. The corresponding system for $\dot z$ is 
$\dot z = i\mu z - \sum \alpha_{m,k}^* z^m (z^*)^k$ that coincides with~(\ref{eq:2fl20}) if $\alpha_{m,k} = - \alpha_{m,k}^*$.
\end{proof} 

Assume that~\eqref{eq:2map}  is conservative and reversible. Then the flow normal form takes the form~\eqref{eq401} with all the coefficients being purely imaginary.  

Let us consider the following truncated $2q$-order normal form of~(\ref{eq401}), 
\begin{equation}\label{eq425}
\dot z =
i\mu z + i\Phi(|z|^2)z + i \alpha (z^*)^{2q-1}  ,
\end{equation}
This system is Hamiltonian and  possesses certain symmetries.

\begin{itemize}
	\item[S1.] Flow~\eqref{eq425} is symmetric with respect to the rotation by  angle $\frac{\pi}{q}$, i.e. that it is invariant under the change
$$
\displaystyle z \to z e^{i\frac{\pi}{q}}
$$
Indeed, after this change, (\ref{eq425}) is rewritten as follows
$$
\begin{array}{l}
\dot z e^{i\frac{\pi}{q}} =
i\mu z e^{i\frac{\pi}{q}} + i\Phi(|z|^2)z e^{i\frac{\pi}{q}} + i\alpha (z^* e^{-i\frac{\pi}{q}})^{2q-1} = \\
\qquad\qquad\qquad\qquad = e^{i\frac{\pi}{q}} \left( i\mu z + i\Phi(|z|^2)z + i \alpha (z^*)^{2q-1}
e^{-i{2\pi}}\right) ,
\end{array}
$$
that shows that the system has not changed.

\item[S2.] System~(\ref{eq425}) is reversible with respect to the involution $z\to z^*$, $z^*\to z$ (in fact, $x\to x$, $y\to -y$) and time reversal $t\to -t$.
Therefore, the reversibility and the rotation symmetry
imply also that, in the $(x,y)$ plane, there exists the set of straight lines with slope $\psi = k \frac{\pi}{q}, \;k= 0,1...,q$, which are the lines of fixed points of the corresponding involution
$$
z e^{ik\frac{\pi}{q}} \to z^* e^{-ik\frac{\pi}{q}}
$$
and time reversal $t\to -t$.

\item[S3.] 
Under rotation by angle $\frac{\pi}{2q}$, i.e. after the change $z\to z e^{i \frac{\pi}{2q}}$, $z^*\to z^* e^{-i \frac{\pi}{2q}}$, 
(\ref{eq425}) takes the form
\begin{equation}\label{eq425n}
\dot z =
i\mu z + i\Phi(|z|^2)z - i \alpha (z^*)^{2q-1}  ,
\end{equation}
that is anti-symmetric to~(\ref{eq425}), since it differs only in the sign before the term $(z^*)^{2q-1}$. Thus, system~(\ref{eq425n}) is also reversible and has the same symmetries as~(\ref{eq425}).
\end{itemize}

Therefore, finally, we obtain the following result

\begin{lm}
In the $(x,y)$-plane there exists the set of straight lines $\psi = k \frac{\pi}{2q}, \;k= 0,1...,2q$, which are the lines of fixed points of the corresponding involution
$$
h_k: \;\;z e^{ik\frac{\pi}{2q}} \to z^* e^{-ik\frac{\pi}{2q}}
$$
for system~(\ref{eq425}).
\label{lm:sym2q}
\end{lm}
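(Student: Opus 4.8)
The plan is to produce every involution $h_k$ by conjugating the single ``base'' reversibility $G_0\colon z\mapsto z^*$ (taken together with the time reversal $t\mapsto -t$), which is symmetry S2, by the rotation through the angle $k\pi/(2q)$: symmetry S1 will give the lines with even index $k$, and symmetry S3 the lines with odd index $k$. Write $R_\phi$ for the rotation $z\mapsto z e^{i\phi}$. I would use two elementary facts, both immediate from the defining relation $G_* X=-X$ of a reversibility by pushing the vector field forward: (i) if $S$ is a symmetry and $G$ a reversibility of a flow $X$, then $S\circ G\circ S^{-1}$ is again a reversibility of $X$; and (ii) if a diffeomorphism $\Psi$ conjugates $X$ to $Y$ and $G$ is a reversibility of $Y$, then $\Psi^{-1}\circ G\circ\Psi$ is a reversibility of $X$. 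Note also that $R_\phi\circ G_0\circ R_{-\phi}$ is automatically an involution, since $G_0^2=\mathrm{id}$.

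\emph{Even indices.} By S1, the rotation $R_{\pi/q}$ --- and hence each power $R_{j\pi/q}$ --- is a symmetry of~\eqref{eq425}. Applying~(i) with $G=G_0$ and $S=R_{j\pi/q}$ shows that, for $j=0,1,\dots,q$, the involution $h_{2j}:=R_{j\pi/q}\circ G_0\circ R_{-j\pi/q}$ is a reversibility of~\eqref{eq425}; a direct computation gives $h_{2j}(z)=z^*e^{2ij\pi/q}$, whose fixed-point set in the $(x,y)$-plane is precisely the straight line through the origin of slope $\psi=j\pi/q=(2j)\cdot\pi/(2q)$. This covers all the lines with even index $k$.

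\emph{Odd indices.} By S3, the rotation $\Psi:=R_{\pi/(2q)}$ conjugates~\eqref{eq425} to the anti-symmetric system~\eqref{eq425n}, and S3 also records that~\eqref{eq425n} is again reversible with respect to $G_0$ and carries the same rotational symmetry as~\eqref{eq425}. Hence, by the even-index step applied to~\eqref{eq425n}, each $R_{j\pi/q}\circ G_0\circ R_{-j\pi/q}$ is a reversibility of~\eqref{eq425n}; by~(ii), its pull-back $\Psi^{-1}\circ\bigl(R_{j\pi/q}\circ G_0\circ R_{-j\pi/q}\bigr)\circ\Psi$ is then a reversibility of~\eqref{eq425}. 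Since all rotations commute, this pull-back equals $R_\phi\circ G_0\circ R_{-\phi}$ with $\phi=(2j-1)\pi/(2q)$, i.e.\ it is the involution $h_{2j-1}$, and its fixed-point set is the line of slope $\psi=(2j-1)\cdot\pi/(2q)$. Letting $j$ run over $1,\dots,q$ yields all the lines with odd index $k$.

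Putting the two cases together, for every $k=0,1,\dots,2q$ the map $h_k=R_{k\pi/(2q)}\circ G_0\circ R_{-k\pi/(2q)}$ is an involution and a reversibility of~\eqref{eq425}, with fixed-point set the straight line of slope $\psi=k\pi/(2q)$ (the values $k=0$ and $k=2q$ giving the same line), which is the assertion. I do not expect a genuine obstacle here --- the argument is essentially group-theoretic bookkeeping --- and the only point requiring care is that S3 delivers the \emph{anti}-symmetric system~\eqref{eq425n} (with $-\alpha$ in place of $\alpha$) rather than~\eqref{eq425} itself; this is harmless precisely because~\eqref{eq425n} is still a normal form of the same type and hence reversible with respect to the very same $G_0$, which is exactly what S3 asserts and what makes the pull-back in the odd-index step legitimate.
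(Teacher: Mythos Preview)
Your argument is correct and follows exactly the route the paper intends: the paper does not give a separate proof of the lemma but simply records it as the outcome of the three symmetry properties S1, S2, S3, and your writeup makes explicit precisely the conjugation-by-rotations bookkeeping (S1 and S2 yielding the even-index lines, S3 supplying the additional half-angle needed for the odd-index lines) that the paper leaves implicit. There is nothing to add.
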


Note that system (\ref{eq425}) is also Hamiltonian, since the complex conjugate has the form
$$
\dot z^* =
-i\mu z^* - i\Phi(|z|^2)z^* - i \alpha z^{2q-1}  ,
$$
and, thus,
$$
\mbox{Div}\; = \; \frac{\partial \dot z}{\partial z} + \frac{\partial \dot z^*}{\partial z^*} = \left(i\mu +i \Phi(|z|^2) +  i \Phi^\prime(|z|^2) zz^*\right) +
\left(-i\mu - i \Phi(|z|^2)- i \Phi^\prime(|z|^2) zz^*\right) \equiv 0
$$
In order to find the Hamiltonian function of~(\ref{eq425}), one can introduce the complex polar coordinates.
Namely, let $z= \sqrt{r} e^{i\psi}$, where $r=x^2+y^2$. Then we obtain from (\ref{eq425})
$$
\begin{array}{l}
\frac{1}{2\sqrt{r}}\dot r  + i \sqrt{r}  \dot\psi = \\
= i\mu \sqrt{r}  + i \Phi(r) \sqrt{r} + i \alpha \frac{r^q}{\sqrt{r}} e^{-i2q\psi} .
\end{array}
$$
Separating the real and imaginary parts of this equality, we obtain the following system
\begin{equation}\label{eq47}
\begin{array}{l}
\dot r =  2\alpha r^q \sin(2q\psi)   , \\
\dot\psi = \mu +
\Phi(r) + \alpha r^{q-1}\cos(2q\psi) .
\end{array}
\end{equation}
One can easily see that~\eqref{eq47} is Hamiltonian 
$$
\dot r  = - \frac{\partial H }{\partial\psi}, \dot\psi = \frac{\partial H}{\partial r},
$$
with the Hamiltonian function
$$H(r,\psi) = \mu r + \int{\Phi(r)dr} + \frac{\alpha}{q}r^q\cos(2q\psi).
$$

Now we can analyze the dynamics and bifurcations of~(\ref{eq425}).
Since $\Phi(r) = l_1 r (1+ O(r))$, where $l_1\neq 0$, for the coordinates $(r,\psi)$ of the equilibria of~(\ref{eq425}), we obtain the following relations
$$2\alpha r^q \sin(2q\psi)= 0,\;\; \mu + l_1r(1+ O(r)) + \alpha r^{q-1}\cos(2q\psi) =0.
$$
Thus, if $\mu l_1 >0$, there are no nonzero equilibria. If $\mu l_1<0$,
there appear $4q$ nonzero equilibria $(\psi_*,r_*)$ that satisfy the equations $\sin(2q\psi_*)= 0$ and $r_* = -\mu/l_1 + O(\mu^2)$. They are located uniformly in the circumference of radius $r_*$ with a difference in angle $\frac{\pi}{2q}$.

As follows from~(\ref{eq47}), the characteristic polynomial for these equilibria has the form
$$
\left(\begin{array}{cc} -\lambda & 4 \alpha q r_*^q\cos( 2q\psi_*) \\   \Phi'(r_*)+ \alpha(q-1) r_*^{q-2} \cos(2q\psi_*) & -\lambda  \end{array} \right) = 0
$$
and, thus, the eigenvalues of the equilibria satisfy 
$$
\lambda^2 = - 4\alpha q l^{1-q} \mu^{q}\cos(2q\psi_*)\left(1+ O(\mu)\right),
$$
where $\cos(2q\psi) = +1$ or $\cos(2q\psi) = -1$ for alternating $\psi_*$:
\begin{itemize}
\item
$\psi_* = \frac{\pi}{q}k, k = 0,1,...,2q-1$, where  $\cos(2q\psi_*) = +1$.  These equilibria are saddles if $\alpha\mu<0$ (or $\alpha l_1>0$) and centers if $\alpha\mu>0$ (or $\alpha l_1<0$);
\item
$\psi_* = \frac{\pi}{2q} + \frac{\pi}{q}k, k = 0,1,...,2q-1$, where $\cos(2q\psi_*) = -1$. These equilibria are centers if $\alpha \mu<0$ ($\alpha l_1>0$) and saddles if $\alpha \mu>0$ ($\alpha l_1<0$).
\end{itemize}

Thus, when $\mu$ varies, we observe the bifurcation related to the appearance, for $\mu l_1 <0$, of a garland consisted of $4q$ equilibria, alternating $2q$ saddles and $2q$ centers, see Fig.~\ref{bdiag1p3n2} for
the cases of the centrally symmetric bifurcations of the 1:3 and 1:5  resonances.

\subsection{Centrally symmetric bifurcations in the general conservative case}\label{sec:SymmCase_2}

Let us consider the general (including non-reversible) conservative case with the coefficient $\delta = \alpha e^{i\vartheta}$, where $0\leq \vartheta <2\pi$, before the term $(z^*)^{2q-1}$. It is known that $\mbox{Re} \delta =0$ if $\vartheta = \pi/2, 3\pi/2$. We assume that, in general, $\mbox{Re} \delta \neq 0$.
We introduce a new coordinate $z = z_{new} e^{i\omega}$. Then flow~(\ref{eq401}) takes the form
$$
\dot z  =
i\mu z  + i\Phi(|z|^2)z  + \alpha e^{i\vartheta} e^{-i2q\omega}  (z^*)^{2q-1}  + O\left(|z|^{2q+1}\right) ,
$$
Hence, we obtain that $\delta_{new} = \alpha e^{i\vartheta} e^{-i2q\omega} = \alpha  e^{i(\vartheta - 2q\omega)}$ and, thus, we can have a new coefficient $\delta_{new} =  \alpha i$ if 
\begin{equation}\label{eq4wn2}
\omega = (\vartheta - \frac{\pi}{2})/2q .
\end{equation}

Thus, the non-reversible conservative case (when $\mbox{Re} \delta \neq 0$) is reduced to the reversible conservative case by means of the rotation of $(x,y)$-coordinates by $\omega$ given in~\eqref{eq4wn2}.
Therefore, dynamics and bifurcations of system (\ref{eq401}) are similar (up to some rotation) in the reversible (with time reversal and the involution $z\to z^*$, $z^*\to z$) and non-reversible cases.
Thus, for system (\ref{eq401}), when $\mu$ varies, if $\mu l_1 <0$, we observe a centrally symmetric bifurcation related to the appearance of a garland $G_{2q,2q}$ consisted of $4q$ equilibria, alternating $2q$ saddles and $2q$ centers, and this garland is rotated by $\omega$ compared to the reversible case $\mbox{Re} \delta = 0$ illustrated in  Fig.~\ref{bdiag1p3n2}  for the 1:3 and 1:5 resonances.

\section{Symmetry-breaking conservative  bifurcations. Proof of Theorem~\ref{th:th2}}  \label{sec:pfork}

In this section we consider symmetry-breaking bifurcations in map~(\ref{eq:2map}) that possesses the central symmetry $z\to -z, z^*\to -z^*$. We introduce two parameters $\mu_1$ and $\mu_2$, where $\mu_1$ is a parameter varying eigenvalues of the $p$:$q$ resonant fixed point. This parameter does not destroy the central symmetry, see Section~\ref{sec:SymmCase}, where $\mu_1=\mu$.
The parameter $\mu_2$ is introduced in order to activate one of the ``frozen'' resonant terms that vanishes in the presence of the symmetry. Among the possible  monomials of even degrees considered in Section~\ref{sec:NFRes} that satisfy the resonant condition~\eqref{eq:rezcond}, term $A_{0,q-1}(z^*)^{q-1}$ has the minimal degree and, therefore, we take it assuming $\mu_2\sim A_{0,q-1}$. Namely, we   consider the following two-parameter flow family which is the truncated $2q$-order normal form of~\eqref{eq402}
\begin{equation}\label{eq51}
\dot z =
i\mu_1 z + i\Phi(|z|^2)z + i\mu_2 (z^*)^{q-1} + i \alpha (z^*)^{2q-1}, 
\end{equation}
where $\mu_1$, $\mu_2$ and $\alpha\neq 0$ are real.
Note that system~(\ref{eq51}) is conservative and reversible with respect to the involution $z\to z^*, z^*\to z$ and time reversal $t\to -t$. We can benefit of this property, since centrally symmetric orbits can undergo pitchfork bifurcations.

In the polar coordinates $(r,\psi)$, where $z = \sqrt{r} e^{i\psi}$, system~(\ref{eq51}) takes the form
\begin{equation}\label{eq53}
\begin{array}{l}
\dot r =   2\mu_2 r^{q/2} \sin (q\psi) + 2 \alpha r^{q}\sin(2q\psi), \\ 
\dot\psi = \mu_1 + \Phi(r) + \mu_2 r^{q/2 - 1} \cos(q\psi) +
\alpha r^{q-1}\cos(2q\psi), 
\end{array}
\end{equation}
This system is Hamiltonian with the Hamiltonian function
\begin{equation}\label{eq53H}
H(r,\psi) = \mu_1 r + \int{\Phi(r)dr} + \frac{2}{q}\mu_2 r^{q/2} \cos q\psi + \frac{\alpha}{q}r^q\cos 2q\psi.
\end{equation}

Nonzero equilibria of~(\ref{eq53}) satisfy the equations
\begin{equation}\label{eq532}
\begin{array}{l}
\sin (q\psi) \left(\mu_2  + 2\alpha r^{q/2}\cos (q\psi)\right) = 0, \\
\mu_1 + \Phi(r) + r^{q/2 -1}\left(\mu_2 \cos (q\psi) + \alpha r^{q/2}\cos (2q\psi) \right)=0.
\end{array}
\end{equation}
From these equations we can deduce that the nonzero equilibria are in the circumference centered at $z=0$ of radius
$$r_*=-\frac{\mu_1}{l_1}+O_{\min\{q/2,2\}} (\mu_1,\mu_2).$$
As for the $\psi$-coordinate of the equilibria, we note that in~\eqref{eq532}, in the first equation  the factor $\sin (q\psi) = 0$ defines the coordinates $\psi_s=\frac{\pi}{q}k, k=0,\pm 1, \ldots$, of centrally symmetric equilibria, 
 while
the factor
$$
\mu_2  + 2\alpha r^{q/2}\cos (q\psi) =0
$$
gives the coordinates of centrally non-symmetric equilibria $\psi_{ns}$. Evidently, these equilibria exist when $-1\leq\cos(q\psi_{ns})\leq 1$, i.e. for values of
the parameters $(\mu_1,\mu_2)$ inside the domain lying between the branches of curve $L_{pf}$ given by~\eqref{eq540} (domain $III$ of Theorem~\ref{th:th2}). 
This curve corresponds to a pitchfork bifurcation of centrally symmetric equilibria $\psi_s$: for $\alpha>0$,  the upper branch of the curve corresponds to the pitchfork bifurcation of equilibria $\psi_s = \pi/q (1 + 2 k)$; 
the lower branch of $L_{pf}$ corresponds to the pitchfork bifurcation of equilibria 
$\psi_s = 2\pi k/q$, and otherwise for $\alpha<0$.

Varying parameters $\mu_1$ and $\mu_2$ further inside domain $III$ (between the branches of $L_{pf}$) one can look at the changes that the Hamiltonian function~\eqref{eq53H} undergoes. One can observe that the separatrices of the saddles modify. For example, in the case $\alpha l_1>0$, the separatrices of $q$ centrally symmetric interior saddles move toward the separatrices of the other centrally symmetric exterior saddles. At some moments (in the bifurcation curves $g_1$ and $g_2$) they connect and splitting of separatrices arises. One can calculate the corresponding Melnikov integrals in order to approximate rigorously these moments (which is out of scope of this paper).

Note that for $\mu_2=0$, the centrally symmetric equilibria are saddles in the case $\mu_1>0$, $\alpha>0$ and centers in the case $\mu_1>0$, $\alpha<0$.

\section{On symmetry-breaking reversible non-conservative  bifurcations.}\label{sec:NonCons}

In this section, we discuss bifurcations of the symmetric $p$:$q$ resonant elliptic point in families $\bar z=f(z_1,z_2,\mu_1, \mu_2)$ that destroy both the symmetry and conservativity but keep reversibility. Note that this kind of perturbations can be obtained using, for instance, the so-called Quispel-Roberts~\cite{RQ92}, cross-form~\cite{GGS21} and antisymmetric~\cite{GSZ22} perturbations.     
We follow the ideas by~\cite{GLRT14,GT17} to study symmetry-breaking bifurcations for reversible perturbations of conservative systems.
We consider a two-parameter family slightly different of system~(\ref{eq402}): 
\begin{equation}\label{eq403}
\begin{array}{l}
\dot z =
i\mu_1 z + i \Phi(|z|^2)z + i\mu_2 (z^*)^{q-1} + i A \mu_2 z^{q+1} + i B \mu_2 z (z^*)^q  + i\alpha (z^*)^{2q-1} + \\
\qquad\qquad\qquad\qquad + O\left(|z|^{2q+1}\right) + O\left(|\mu||z|^{q+2}\right),
\end{array}
\end{equation}
where $A$ and $B$ are real coefficients as well as $\mu_1$, $\mu_2$ are parameters, $\Phi$ takes the form~\eqref{eq:Phi} and $\alpha$ is a real coefficient as in previous sections. This normal form includes new  resonant terms that break down the conservativity. In Section~\ref{sec:NFRes}, such terms that satisfy the resonant condition~\eqref{eq:rez0} and have minimal degree are  
$A_{q+1,0} z^{q+1}$ and $A_{1,q}z(z^*)^q$.
Then in flow~\eqref{eq403}, 
the new coefficients $iA\mu_2$ and $iB\mu_2$ correspond to $A_{q+1,0}$ and $A_{1,q}$, respectively, and they are purely imaginary in order to  preserve the reversibility under the involution $z\to z^*$, $z^*\to z$ and time reversal $t\to -t$, see Lemma~\ref{lm:revfl}.

Flow~\eqref{eq403} is no longer conservative, since the divergence
\begin{equation}\label{eq:DivRevNoCons} 
\mbox{Div} \; \equiv \; \frac{\partial \dot z}{\partial z} + \frac{\partial \dot z^*}{\partial z^*} =
i\mu_2((q+1)A -B)(z^q - (z^*)^q) + O(|z|^{2q}) 
\end{equation}
is not identical zero if $B\neq A(q+1)$. The divergence also implies that
the reversibly symmetric equilibria (for which $z^q = (z^*)^q$) should be conservative. 

\begin{figure}[t]
	\centerline{\includegraphics[width=12cm]{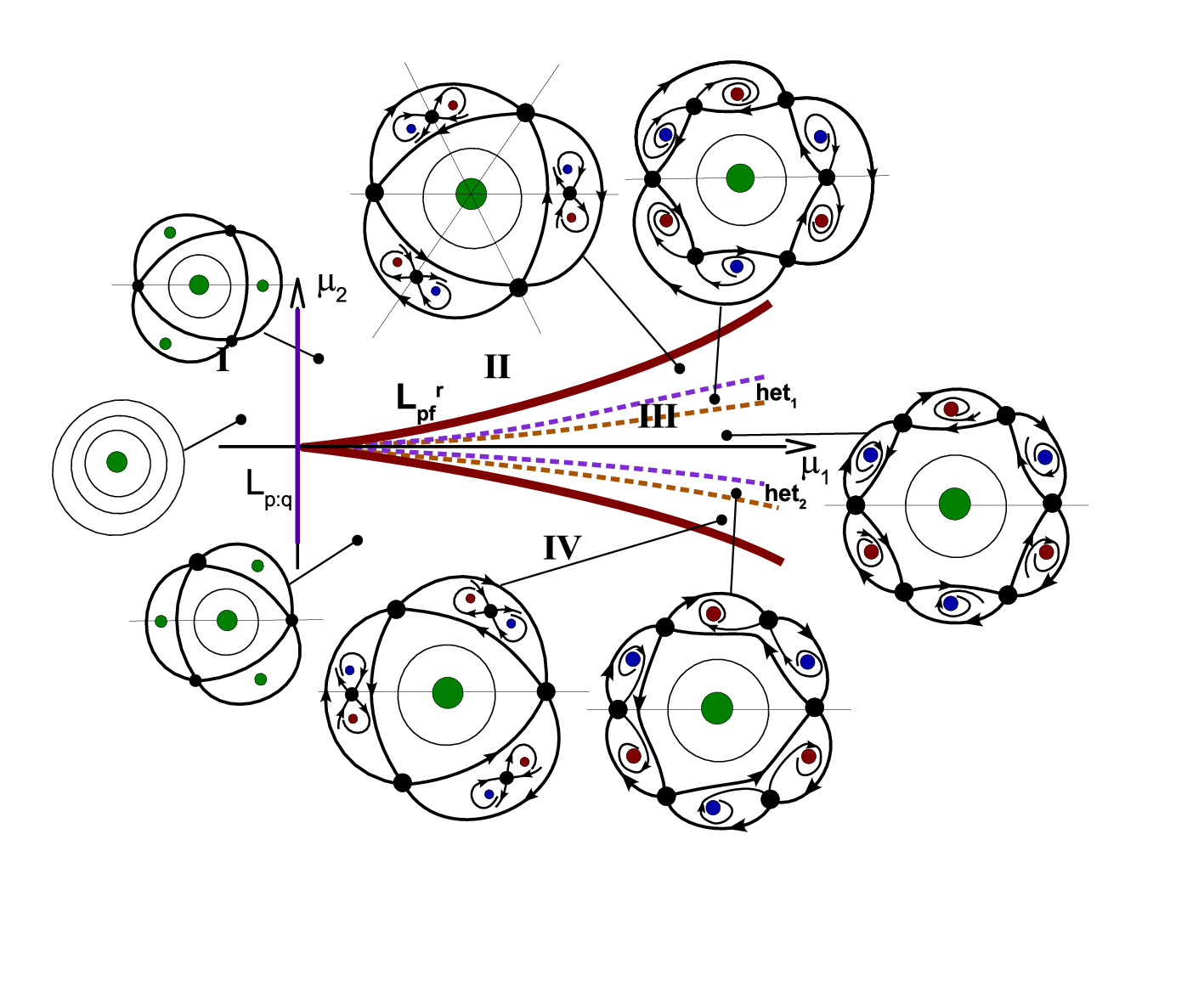}}
	\caption{{\footnotesize A reversible bifurcation of $p$:$q$ resonance with odd $q\geq 3$ for a two-parameter family where both the central symmetry and conservativity are broken. Main elements of the bifurcation diagram for the case of the 1:3 resonance with $l_1\alpha >0$ are illustrated. The bifurcation diagram in the case $l_1\alpha<0$ is similar to Fig.~\ref{bifpdf1p4q}b for the conservative flow~(\ref{eq402}) 
	}}
	\label{bdrev1p3v2}
\end{figure}

However, reversibly non-symmetric orbits can be non-conservative, and they appear due to pitchfork bifurcations  as in Section~\ref{sec:pfork}. In the $(\mu_1,\mu_2)$-parameter plane, the equation of the corresponding reversible bifurcation curve $L^r_{pf}$ is asymptotically  close (up to the $O$-terms) to curve $L_{pf}$ given in~(\ref{eq540}). However, the corresponding bifurcation is different, it is a reversible pitchfork bifurcation of reversibly symmetric equilibria~\cite{LT12}. If the equilibrium is a center, it becomes a reversibly symmetric saddle and a reversibly symmetric pair of new equilibria emerges such that one equilibrium is a stable focus the other is an unstable focus (supercritical reversible pitchfork bifurcation). If the equilibrium is a saddle, then it becomes a reversibly symmetric center and a reversibly symmetric pair of saddle equilibria with positive and negative divergence appears (subcritical reversible pitchfork bifurcation). As follows from~(\ref{eq:DivRevNoCons}), if $B\neq A(q+1)$, these non-symmetric equilibria have the divergence of opposite sings. Thus, they are either non-conservative saddles (with $\mbox{Div}_1 = - \mbox{Div}_2 \neq 0$) or stable and unstable foci.
Thus, Theorem~\ref{th:th2} can be g eneralized to the case of small reversible perturbations that do not preserve the conservativity. Note that similar result was proved in \cite{GT17} for $q$-order normal form.

Let $V_0$ be a small neighbourhood of the origin in the $(\mu_1,\mu_2)$-parameter plane and 
let $L^r_{pf}$ and $L_{p:q}$ be the bifurcation curves   that
correspond  to a reversible pitchfork bifurcation of $q$ reversibly symmetric nonzero equilibria and the $p$:$q$ resonance, respectively.
Then  curves $L_{pf}^r$ and $L_{p:q}$ divide $V_0$ into 4 domains $I$-$IV$ with the following dynamics of~(\ref{eq403}), see Fig~\ref{bdrev1p3v2}. 
Flow~\eqref{eq403} has no nonzero equilibria in domain $I$.
In domains $II$ and $IV$, there appears a garland $G_{q,q}$ which consists of $q$ reversibly symmetric conservative saddles and $q$ reversibly symmetric conservative centers. 
In domain $III$, the dynamics of~(\ref{eq403}) is reversible and non-conservative if $B\neq A(q+1)$. Moreover, 
if $l_1\alpha <0$, after reversibly symmetric saddles undergo subcritical reversible pitchfork bifurcations $L^r_{pf}$, system~(\ref{eq403}) has $4q$ nonzero equilibria which form a non-conservative garland $G'_{2q,q,q}$ which consists of $2q$ conservative and reversibly symmetric centers and $q$ reversibly symmetric pairs of saddles with positive and negative divergence. However, we need to stress that~(\ref{eq403}) is not Hamiltonian and it has an integrating factor that is singular at the saddle equilibria. The phase portraits of~(\ref{eq403}) for $l_1\alpha<0$ are similar to the ones in Fig.~\ref{bifpdf1p4q}b for flow~(\ref{eq402}).

As for $l_1\alpha >0$, after a supercritical reversible pitchfork bifurcation of reversibly symmetric centers, system~(\ref{eq403}) has $4q$ nonzero equilibria in domain $III$. The $2q$ equilibria are reversibly symmetric saddles and the other $2q$ equilibria compose $q$ reversibly symmetric pairs of stable and unstable foci. All these equilibria compose a a new type of a non-conservative garland $G'_{2q,q,q}$ consisting of $2q$ conservative reversibly symmetric saddles, $q$ stable foci and $q$ unstable  foci. At varying parameters inside domain $III$, garland $G'_{2q,q,q}$ should pass through various heteroclinic reconstructions in bifurcation curves that correspond to heteroclinic connections between the saddles.

For the map, these garlands corresponds to discrete garlands $G'_{2q,q,q}$ with two reversibly symmetric $q$-periodic saddle orbit and $q$-periodic stable focus and $q$-periodic unstable focus.

\section{Acknowledgments}

The author thanks S.~V.~Gonchenko and D.~V.~Turaev for fruitful discussions. The author also acknowledges the Serra H\'unter program, the Spanish grant PID2021-125535NB-I00 (MICINN/AEI/FEDER, UE) and the Catalan grant 2021-SGR-01072.

\end{document}